\DeclareMathOperator{\tr}{TR}
\DeclareMathOperator{\im}{Im}
\DeclareMathOperator{\Ker}{Ker}
\DeclareMathOperator{\GL}{GL}
\DeclareMathOperator{\proj}{Proj}
\DeclareMathOperator{\HC}{HC}
\newcommand{\T}[1]{\textup{#1}}
\newcommand{\lrar}{\Leftrightarrow}
\newcommand{\rar}{\Rightarrow}
\DeclareMathOperator{\idem}{Idem}
\DeclareMathOperator{\alg}{alg}
\DeclareMathOperator{\rel}{rel}
\DeclareMathOperator{\ch}{ch}
\DeclareMathOperator{\topp}{top}
\newcommand{\C}{\mathbb C}
\newcommand{\N}{\mathbb N}
\newcommand{\R}{\mathbb R}
\newcommand{\scr}[1]{\mathscr #1}
\newcommand{\bb}[1]{\mathbb #1}
\newcommand{\arrm}[3]{\xymatrix{ #2 \ar@{>->}[r]^-{#1} & #3}}
\newcommand{\arre}[3]{\xymatrix{ #2 \ar@{->>}[r]^-{#1} & #3}}
\newtheorem{theorem}{Theorem}[section]
\newtheorem{lemma}[theorem]{Lemma}
\newtheorem{prop}[theorem]{Proposition}
\theoremstyle{definition}
\newtheorem{defn}[theorem]{Definition}
\newtheorem{example}[theorem]{Example}
\newtheorem{remark}[theorem]{Remark}
\newtheoremstyle{note}
  {\topsep}{\topsep}%
  {\normalfont}{}%
  {\bfseries}{}%
  {0.5em}{}%
\theoremstyle{note}
\newtheorem{note}[theorem]{}
\numberwithin{equation}{section}
\newcommand{\one}{\mathbbm 1}
\newcommand{\abs}[1]{\left \lvert#1 \right \rvert}
\newcommand{\norm}[1]{\left \lVert #1 \right \rVert}
\newcommand{\sL}{\mathscr L}
\author{
 Peter Hochs\footnote{University of Adelaide, \texttt{peter.hochs@adelaide.edu.au}}
  \and
  Jens Kaad\footnote{University of Southern Denmark, \texttt{jenskaad@hotmail.com}}
  \and
   Andr\'e Schemaitat\footnote{University of M\"unster, \texttt{a.schemaitat@uni-muenster.de}}
}
\begin{document}

\title{Algebraic $K$-theory and a semi-finite Fuglede-Kadison determinant}

\maketitle

\begin{abstract}
In this paper we apply algebraic $K$-theory techniques to construct a Fuglede-Kadison type determinant for a semi-finite von Neumann algebra equipped with a fixed trace. Our construction is based on the approach to determinants for Banach algebras developed by Skandalis and de la Harpe. This approach can be extended to the semi-finite case since the first topological $K$-group of the trace ideal in a semi-finite von Neumann algebra is trivial. On our way we also improve the methods of Skandalis and de la Harpe by considering relative $K$-groups with respect to an ideal instead of the usual absolute $K$-groups. Our construction recovers the determinant homomorphism introduced by Brown, but all the relevant algebraic properties are automatic due to the algebraic $K$-theory framework.
\end{abstract}

\section{Introduction}
The first instance where one encounters the relationship between algebraic $K$-theory and determinants is in the isomorphism between the first algebraic $K$-group of the complex numbers and the complex multiplicative group. This isomorphism is implemented by the determinant of an invertible matrix. In the present paper we will expand on this relationship in the context of Banach algebras and in particular we will see how to recover the Fuglede-Kadison determinant for semi-finite von Neumann algebras as introduced by Brown, \cite{BRO,FK}. Brown based his construction on ideas of Grothendieck \cite{GRO} and Fack \cite{FACK82, FACK83}, who defined a determinant function as an analogue of the product of the eigenvalues up to a given cutoff.


The main advantage of applying an algebraic $K$-theory approach to determinants is that all the algebraic properties of determinants follow as a direct consequence of the definitions. And moreover, when determinants are interpreted as invariants of algebraic $K$-theory they can be used to detect non-trivial elements in these (in general) rather complicated abelian groups. On the other hand, basing the construction of determinants purely on functional analytic methods, requires a substantial amount of work for proving the main algebraic properties and the more conceptual framework provided by algebraic $K$-theory is entirely lost.

The key property that we investigate in this text is the relationship between the operator trace, the logarithm and the determinant as expressed by the identity:
\[
\log( \T{det}(g)) = \T{Tr}( \log(g) ).
\]
In order to expand on this basic relationship in a $K$-theoretic context one considers a unital Banach algebra $A$ together with the homomorphism
\[
\T{GL}(A) \to \T{GL}^{\T{top}}(A),
\]
where $\T{GL}(A)$ denotes the general linear group (over $A$) equipped with the discrete topology and $\T{GL}^{\T{top}}(A)$ is the same algebraic group but with the topology coming from the unital Banach algebra $A$. Passing to classifying spaces and applying Quillen's plus construction, \cite{QUILLEN}, one obtains a continuous map (which is unique up to homotopy)
\[
\T{BGL}(A)^+ \to \T{BGL}^{\T{top}}(A).
\]
By taking homotopy fibres and homotopy groups this gives rise to a long exact sequence of abelian groups:
\[
\begin{CD}
K_{*+1}^{\T{top}}(A) @>{\partial}>> K_*^{\T{rel}}(A) @>>> K_*^{\T{alg}}(A) @>>> K_*^{\T{top}}(A) 
\end{CD},
\]
which is related to the $SBI$-sequence in continuous cyclic homology by means of Chern characters, resulting in the commutative diagram
\begin{equation}\label{eq:checha}
\begin{CD}
K_{*+1}^{\T{top}}(A) @>{\partial}>> K_*^{\T{rel}}(A) @>>> K_*^{\T{alg}}(A) @>>> K_*^{\T{top}}(A) \\
@V{\T{ch}^{\T{top}}}VV @V{\T{ch}^{\T{rel}}}VV @V{\T{ch}^{\T{alg}}}VV @V{\T{ch}^{\T{top}}}VV \\
HP_{*+1}(A) @>{S}>> HC_{*-1}(A) @>{B}>> HN_*(A) @>{I}>> HP_*(A) 
\end{CD}
\end{equation}
of abelian groups, see \cite{KAR,CONKAR}.

In this paper we focus on the low degree (and more explicit) version of this commutative diagram. More precisely, supposing that the unital Banach algebra $A$ comes equipped with a tracial functional $\tau \colon A \to \C$ one obtains an invariant of the continuous cyclic homology group $HC_0(A)$ and hence by precomposition with the relative Chern character we obtain a homomorphism
\[
\tau \circ \T{ch}^{\T{rel}} \colon K_1^{\T{rel}}(A) \to \C.
\]
Supposing furthermore that $K_1^{\T{top}}(A) = \{0\}$ it follows from the commutative diagram in \eqref{eq:checha} combined with Bott-periodicity in topological $K$-theory that the character $\tau \circ \T{ch}^{\T{rel}}$ induces a homomorphism
\[
\T{det}_{\tau} \colon K_1^{\T{alg}}(A) \to \C / \big( 2 \pi i \cdot \T{Im}( \underline \tau ) \big),
\]
where $\underline \tau \colon K_0^{\T{top}}(A) \to \C$ is the character on even topological $K$-theory induced by our tracial functional. In this way we recover the determinant defined by Skandalis and de la Harpe, see \cite{SKANDHARPE,HARPE}.

We extend this framework for defining determinants by incorporating that the tracial functional $\tau$ might only be defined on an ideal $J$ sitting inside the unital Banach algebra $A$ (where $J$ is not required to be closed in the norm-topology of $A$). In this context we assume that $\tau \colon J \to \C$ is a hyper-trace in the sense that $\tau(ja) = \tau(aj)$ for all $a \in A$, $j \in J$. The correct $K$-groups to consider are then relative versions of relative $K$-theory and algebraic $K$-theory and similarly one considers relative versions of the cyclic homology groups appearing in the $SBI$-sequence (we do not use relative topological $K$-theory because of excision). The idea of applying relative $K$-groups in relation to determinant-type invariants of algebraic $K$-theory was (among other things) developed in the PhD-thesis of the second author, \cite{KAAD}.

In the setting of a semi-finite von Neumann algebra $N$ equipped with a fixed normal, faithful and semi-finite trace $\tau \colon N_+ \to [0,\infty]$ it is relevant to look at the trace ideal 
\[
\sL^1_\tau(N) := \{ x \in N : \tau(\abs x) < \infty\}
\]
sitting inside the von Neumann algebra $N$. Since $K_1^{\T{top}}(\sL^1_\tau(N)) = \{0\}$ and $\T{Im} \big ( \underline{\tau} \colon K_0^{\T{top}}(\sL^1_\tau(N)) \to \C \big ) \subseteq \R$ we obtain an algebraic $K$-theory invariant\footnote{In the main text, we denote this map by $\widetilde{\det_{\tau}}$, and use the notation $\det_{\tau}$ for the composition with the isomorphism $\C/i\R \cong (0,\infty)$ given by $z + i\R \mapsto  e^{(z + \bar z) / 2}$.}
\[
\T{det}_{\tau} \colon K_1^{\T{alg}}( \sL^1_\tau(N),N) \to \C/ i \R,
\]
which recovers the Fuglede-Kadison determinant in the context of semi-finite von Neumann algebras, see \cite{BRO,FK}. We emphasize one more time that all the relevant algebraic properties of this determinant follow immediately from its construction. Moreover, we show that $\T{det}_{\tau}$ is given by the explicit formula
\begin{equation} \label{eq det formula}
\T{det}_{\tau}(g) = \tau( \log(|g|))  + i \bb R \quad \big  (g \in \T{GL}_n(N) \, , \, \, g - \one_n \in M_n( \sL^1_\tau(N) \big )
\end{equation}
 Here, $\tau$ is extended to $M_n(N)$ in the obvious way by  taking the sum over the diagonal.

Recently,  the Fuglede-Kadison determinant was generalized in another direction by Dykema, Sukochev and Zanin, to operator  bimodules over II$_1$-factors   \cite{DSZ}. They define this determinant using functional analytic methods,  via an expression analogous to \eqref{eq det formula}. It then requires an elaborate argument to prove that this determinant is multiplicative \cite[Theorem 1.3]{DSZ}.

The present paper is organized as follows: In Section \ref{s:relpai} we introduce the relevant $K$-groups and in Section \ref{s:comseq} we develop the low degree version of the long exact sequence which compares relative algebraic $K$-theory to topological $K$-theory. In Section \ref{section:chern-character} we introduce the low degree version of the relative Chern character in the presence of an ideal $J \subseteq A$. In Section \ref{s:reldet} we present our relative approach to the construction of Skandalis-de la Harpe determinants. In Section \ref{s:toptra} we show that the first topological $K$-group of the trace ideal in a semi-finite von Neumann algebra is trivial and in Section \ref{s:fugkad} we apply this fact to construct the semi-finite Fuglede-Kadison determinant.

\subsection*{Acknowledgements}

The authors thank Fedor Sukochev and Ken Dykema for pointing out some relevant literature.
PH was supported by the European Union, through Marie Curie Fellowship PIOF-GA-2011-299300. JK was supported by the Radboud Excellence Initiative.

\section{$K$-theory for relative pairs of Banach algebras}\label{s:relpai}

\begin{defn}
	\label{def:relative-pairs}
	Let $(A, \norm \cdot _A)$ be a unital Banach algebra and $J \subset A$ be a (not necessarily closed) ideal. We call $(J,A)$ a \textbf{relative pair of Banach algebras} if and only if the following holds:
	\begin{enumerate}
		\item $J$ is a Banach algebra in its own right. Thus, $J$ is endowed with a norm $\norm \cdot _J \colon J \to [0,\infty)$ such that $(J, \norm \cdot _J)$ is a Banach algebra.
		\item For all $a,b \in A$ and $j \in J$ we have
		\[
			\norm{ajb}_J \leq \norm a_A \norm j_J \norm b _A.
		\]
		\item For all $j \in J$ we have
		\[
			\norm j_A \leq \norm j_J.
		\]
	\end{enumerate}
	This defines a category having as objects relative pairs of Banach algebras and morphisms 
	\[
		\phi \colon (J,A) \to (I,B)
	\]
	unital homomorphisms of Banach algebras (not necessarily isometric) $\phi \colon A \to B$ such that $\phi(J) \subset I$ and the restriction $ \phi|_J \colon (J, \norm \cdot _J) \to (I, \norm \cdot _I)$ is bounded.
\end{defn}

\begin{note}
\label{note:relative-pairs}
For a relative pair of Banach algebras $(J,A)$ we obtain    for all $n \in \N$ a relative pair of Banach algebras $(M_n(J), M_n(A))$, where the $(n \times n)$-matrices are equipped with the norm $\norm j _{M_n(J)} := \sum_{k,l = 1}^n \| j_{kl} \|_J$ (and similarly for $M_n(A)$).
\end{note}

	\begin{example}
		Let $H$ be a separable Hilbert space. The standard example of a relative pair is
		\[
			(\scr L^1(H),\sL(H))
		\] 
		where $\scr L^1(H)$ denotes the trace class operators inside the bounded operators $\sL(H)$,  \cite[Th. 2.4.15]{MU}.
	\end{example}
	
	\begin{note}
		The rest of this section is a reminder on various $K$-groups for relative pairs of Banach algebras. A standard reference for topological $K$-theory is \cite{BA}. A very good treatment of algebraic $K$-theory can be found in \cite{ROS}. The probably more uncommon relative $K$-theory of Banach algebras has been introduced in \cite{KAR} and \cite{CONKAR}.
	\end{note}

\begin{defn}
	For a complex algebra $A$ we use the following notations:
	\begin{itemize}
		\item If $A$ is unital, we let $\GL_1(A)$ be the group of invertibles in $A$.
		\item By $\idem(A) = \{e \in A : e^2 = e\}$ we denote the set of idempotents in $A$.
		\item If $A$ is a $*$-algebra, we let $\proj(A) = \{ p \in A : p^2 = p = p^*\}$ be the set of projections in $A$.
	\end{itemize}
\end{defn}

\begin{defn}
	Let $A$ be a Banach algebra. If $A$ has a unit, we denote the group of invertible elements in $M_n(A)$ by $\GL_n(A)$. If $A$ has no unit, we define  for all $n \in \N$ the group
	\[
		\GL_n(A) := \{g \in \GL_n(A^+) : g - \one_n \in M_n(A) \} \subset \GL_n(A^+),
	\]
	where $A^+$ is the unitization of $A$ and $\one_n$ the unit of $\GL_n(A^+)$. Since $M_n(A)^+$ is not the same as $M_n(A^+)$ it is worth noting that  $\GL_1(M_n(A)) \cong \GL_n(A)$, even if $A$ is not unital. This is due to the normalization of the scalar part. We can now define the direct limits
	\begin{itemize} 
		\item $M_\infty(A) := \lim_{n \to \infty} M_n(A)$, 
	under the inclusions 
		\[
			M_n(A) \to M_m(A); \qquad  x \mapsto x \oplus 0_{m-n} \qquad (n \leq m)
	\]
	\item $\GL_\infty(A) := \lim_{n \to \infty} \GL_n(A)$, under the inclusions 
	\[
		\GL_n(A) \to \GL_m(A); \qquad g \mapsto g \oplus \one_{m-n} \qquad (n \leq m)
	\]
	\item $\idem_\infty(A) := \idem(M_\infty(A))$.
	\item $\proj_\infty(A) := \proj(M_\infty(A))$ if $A$ is a $*$-algebra.
	\end{itemize}
	We make $M_\infty(A)$ into a normed algebra by 
	\begin{equation}\label{eq:norm-Minfty}
		\norm{x}_{M_\infty(A)} := \sum_{i,j = 1}^\infty \norm{x_{ij}}_A.
	\end{equation}	
	Then $M_\infty(A)$ equals the normed inductive limit of the $M_n(A)$.
	The norm induces a metric on $\GL_\infty(A)$ by 
	\[
		d(x,y) := \norm{x-y}_{M_\infty(A)}
	\] 
	and makes it into a topological group. By $\GL_\infty^0(A)$ we will denote the path component of the identity in $\GL_\infty(A)$.
\end{defn}

\begin{defn}
	\label{def:top-K}
	The {\bf topological $K$-groups} of the pair $(J,A)$ can be defined to be the usual topological $K$-groups of $J$, i.e. 
	\[
		K_i^{\T{top}}(J,A) := K_i^{\T{top}}(J) \qquad (i=0,1).
	\]
This is due to the fact that topological $K$-theory satisfies excision, \cite[Th. 5.4.2]{BA}.
	Recall that 
	\[
		K_0^{\T{top}}(J) = \T{Ker}\big( K_0^{\T{top}}(J^+) \to K_0^{\T{top}}(\C) \big),
	\] 
where $K_0^{\T{top}}(J^+)$ is the Grothendieck group of the semi-group 
	\[
		V(J^+) = \idem_\infty(J^+) / \sim_s,
	\]
	and $\sim_s$ is the equivalence relation of being similar. To be more precise: If $e,f \in \idem_\infty(J^+)$ we say that $e$ and $f$ are similar if and only if there exists an element $g$ in $\GL_\infty(J^+)$ with $geg^{-1} = f$. The product is calculated in $M_\infty(J^+)$. Furthermore, we have 
	\[
		K_1^{\T{top}}(J) = \GL_\infty(J) / \GL_\infty^0(J).
	\]
	For our purposes it will be useful to know another realization of $K_0^{\T{top}}(J)$, namely $K_2^{\T{top}}(J)$, which may be (cf. \cite[9.1]{BA}) defined by 
	\[
\begin{split}
		K_2^{\T{top}}(J)  
& = \lim_{n \to \infty} \pi_1(\GL_n(J),\one_n) \\ 
& = \lim_{n \to \infty} \big\{[\gamma] \in C^\infty(S^1,\GL_n(J)) / \sim \ : \   \gamma(1) = \one_n \big\}.
\end{split}
	\]
The equivalence relation $\sim$ is given by smooth basepoint preserving homotopies. This definition coincides with the fundamental group of $\GL_\infty(J)$ whose basepoint is the identity. Under this identification one may show that the usual group operation of concatenation of loops coincides with the pointwise product.
	\par The fact that $K_0^{\T{top}}(J)$ and $K_2^{\T{top}}(J)$ are isomorphic is known as Bott periodicity ,  \cite[Th. 9.2.1]{BA}. An explicit isomorphism is given by 
\[
	\beta_J \colon K_0^{\topp}(J) \to K_2^{\topp}(J); \qquad  [e] - [f] \mapsto [\gamma_e \gamma_f^{-1}],
\]
	where $e,f \in \idem(M_n(J^+))$ satisfy  $e-f \in M_n(J)$. The so-called idempotent loops $\gamma_e$ are defined by 
\[
	\gamma_e(z) := ze + \one_n -e \qquad ( z \in S^1).
\]
\end{defn}

\begin{defn}
	The {\bf first algebraic $K$-theory} of the pair $(J,A)$ is defined by
\[
	K_1^{\alg}(J,A) := \GL_\infty(J) / [\GL_\infty(J),\GL_\infty(A)],
\]
where \[
		[\GL_\infty(J),\GL_\infty(A)] := \langle ghg^{-1}h^{-1} : g \in \GL_\infty(J),h \in \GL_\infty(A) \rangle 
	\] 
is a normal subgroup of $\GL_\infty(J)$. $K_1^{\alg}$ is a functor from relative pairs of Banach algebras to abelian groups.
\end{defn}

\begin{defn}
	\label{def:relative-K-theory}
	Let $A$ be a Banach algebra. For all $n \in \N$, we let $R_n(A)$ denote the group of smooth paths $\sigma \colon [0,1] \to \GL_n(A)$ such that $\sigma(0) = \one_n$. The group operation is given by pointwise multiplication. 
	\par Now, let $(J,A)$ be a relative pair of Banach algebras. From the compatibility of the norms on $J$ and $A$ (see Definition \ref{def:relative-pairs}) it follows that 
	\[
		\sigma \tau \sigma^{-1} \tau^{-1} \in R_n(J)  \qquad ( \sigma \in R_n(J), \tau \in R_n(A)).
	\]
We thus have the normal subgroup
\[
[R_n(J),R_n(A)] := \langle \sigma \tau \sigma^{-1} \tau^{-1} \mid \sigma \in R_n(J) \, , \, \, \tau \in R_n(A) \rangle
\]
of $R_n(J)$. On $R_n(J)$ we may consider the equivalence relation $\sim$ of being homotopic with fixed endpoints through a smooth homotopy. Denote by $q\colon R_n(J) \to R_n(J) / \sim$ the quotient map. 
We define
\[
		F_n(J,A) := q\big( [R_n(J),R_n(A)] \big).
\]
 Since $R_n(J) / \sim$ is still a group under pointwise multiplication and $F_n(J,A)$ is a normal subgroup of $R_n(J) / \sim$, we may define
	\[
		K_1^{\rel}(J,A) := \lim_{n \to \infty}\big( (R_n(J) / \sim) / F_n(J,A) \big),
	\]
where the direct limit is taken over the group homomorphisms induced by the inclusions $\GL_n(J) \to \GL_{m}(J)$, $n \leq m$. In this way we obtain a functor from relative pairs of Banach algebras to abelian groups.
\end{defn}

\begin{lemma}
	\label{lem:commutators}
	Let $(J,A)$ be a relative pair of Banach algebras. Let $g \in \GL_\infty(J)$ and $h \in \GL_\infty(A)$. Then, there exist $g_0 \in \GL_\infty^0(J)$ and $h_0 \in \GL_\infty^0(A)$ such that
	\[
		[g,h] = [g_0,h_0].
	\]
	 In particular, 
	\[
		[\GL_\infty(J),\GL_\infty(A)] \subset \GL_\infty^0(J).
	\]
	This standard argument in $K$-theory, which is a variation of Whitehead's lemma, is an immediate consequence of  \cite[Th. 4.2.9]{WO}.
\end{lemma}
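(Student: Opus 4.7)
The plan is to reduce this to the standard Whitehead-type lemma cited as [WO, Th.\ 4.2.9]: for any $g \in \GL_n(J)$, the block-diagonal matrix $\diag(g, g^{-1}) \in \GL_{2n}(J)$ lies in the identity component $\GL_{2n}^0(J)$. In the Banach-algebra setting, this is realised by letting $R_t \in M_{2n}(\C)$ be the scalar rotation of angle $t$ mixing the two $n$-blocks and considering the smooth path
\[
t \in [0, \pi/2] \longmapsto R_t \diag(g, \one_n) R_t^{-1} \diag(\one_n, g^{-1}),
\]
which runs from $\diag(g, g^{-1})$ at $t = 0$ to $\one_{2n}$ at $t = \pi/2$. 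This path lies in $\GL_{2n}(J)$, not merely in $\GL_{2n}(A)$, because $M_n(\C)$ normalises $M_n(J)$ and $M_n(J)$ is an ideal in $M_n(A)$, by the defining properties of a relative pair of Banach algebras in Definition~\ref{def:relative-pairs}. The same construction applied to $h \in \GL_n(A)$ yields $\diag(h, h^{-1}) \in \GL_{2n}^0(A)$.

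Next, I would stabilise to $\GL_{4n}$ and take
\[
g_0 := \diag(g, g^{-1}, \one_n, \one_n) \in \GL_{4n}(J), \qquad h_0 := \diag(h, \one_n, h^{-1}, \one_n) \in \GL_{4n}(A).
\]
A direct four-block computation gives
\[
[g_0, h_0] = \diag([g, h], \one_n, \one_n, \one_n),
\]
which under the stabilisation map $\GL_{4n}(J) \hookrightarrow \GL_\infty(J)$ coincides with $[g, h]$. The fact that $g_0 \in \GL_{4n}^0(J)$ is immediate, since $g_0 = \diag(g, g^{-1}) \oplus \one_{2n}$ and $\diag(g, g^{-1}) \in \GL_{2n}^0(J)$ by the first step, while the stabilisation $\oplus \one_{2n}$ is continuous and sends $\one_{2n}$ to $\one_{4n}$. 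For $h_0$, let $P \in \GL_{4n}(\C) \subset \GL_{4n}(A)$ be the scalar permutation swapping the second and third $n$-blocks; then $P h_0 P^{-1} = \diag(h, h^{-1}) \oplus \one_{2n} \in \GL_{4n}^0(A)$ by the analogue of the first step for $A$, whence $h_0 \in \GL_{4n}^0(A)$ by normality of the identity component.

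The consequence $[\GL_\infty(J), \GL_\infty(A)] \subseteq \GL_\infty^0(J)$ then follows at once: picking a continuous path $s \mapsto h_0(s)$ from $\one$ to $h_0$ inside $\GL_\infty(A)$, the map $s \mapsto g_0 h_0(s) g_0^{-1} h_0(s)^{-1}$ is a continuous path in $\GL_\infty(J)$, by the same ideal computation $g_0 h_0(s) g_0^{-1} h_0(s)^{-1} - \one \in M_\infty(J)$, connecting $\one$ to $[g_0, h_0] = [g, h]$. The only genuine technical point in the whole argument is the bookkeeping ensuring that each path stays inside $\GL_\infty(J)$ rather than escaping into $\GL_\infty(A)$, and this invariably reduces to the ideal axioms of a relative pair.
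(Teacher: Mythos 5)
Your argument is correct and is exactly the ``variation of Whitehead's lemma'' that the paper invokes by citing \cite[Th.\ 4.2.9]{WO}: the rotation homotopy showing $\diag(g,g^{-1})\in\GL_{2n}^0(J)$, the $4n$-block bookkeeping giving $[g_0,h_0]=[g,h]\oplus\one_{3n}$, and the ideal axioms of a relative pair keeping every path inside $\GL_\infty(J)$ are all checked correctly. This matches the paper's (uneffected) proof, just written out in full.
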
	

\section{The comparison sequence}\label{s:comseq}

\begin{defn}
	We define the following group homomorphisms 
	\begin{enumerate}[label=$\bullet$]
		\item $\partial \colon K_2^{\T{top}}(J) \to K_1^{\rel}(J,A); \qquad [\gamma]  \mapsto  [t \mapsto \gamma(e^{2 \pi it})]$
		\item $\theta \colon K_1^{\rel}(J,A) \to K_1^{\alg}(J,A); \qquad [\sigma] \mapsto [\sigma(1)^{-1}]$
		\item $p \colon K_1^{\alg}(J,A) \to K_1^{\T{top}}(J); \qquad [g] \mapsto [g]$
	\end{enumerate}
	It is easy to see that $\partial$ and $\theta$ are well-defined. By Lemma \ref{lem:commutators} also $p$ is well defined.
\end{defn}

\begin{lemma}
		\label{lem:comparison-sequence}
		The  sequence 
			\[
			\xymatrix{
			K_2^{\T{top}}(J) \ar[r]^-{\partial} & K_1^{\rel}(J,A) \ar[r]^-\theta & K_1^{\alg}(J,A) \ar[r]^-p & K_1^{\T{top}}(J) \ar[r] & 0
			}
		\]
	is exact.
	\end{lemma}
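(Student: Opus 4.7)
The plan is to verify exactness at each of the three non-trivial spots; the final surjectivity of $p$ is immediate because every class in $K_1^{\T{top}}(J) = \GL_\infty(J)/\GL_\infty^0(J)$ is already represented by some $g \in \GL_\infty(J)$, which lifts to $[g] \in K_1^{\T{alg}}(J,A)$.

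For exactness at $K_1^{\T{alg}}(J,A)$, the inclusion $p \circ \theta = 0$ holds because, for any $\sigma \in R_n(J)$, the element $\sigma(1)$ lies in the identity component $\GL_n^0(J)$, hence so does $\sigma(1)^{-1}$. Conversely, if $p[g]=0$ then $g \in \GL_\infty^0(J)$; using the standard fact that the path components and smooth path components of the Banach Lie group $\GL_n(J)$ coincide, I would pick a smooth path $\sigma \in R_n(J)$ with $\sigma(1) = g^{-1}$, so that $\theta[\sigma] = [g]$.

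For exactness at $K_1^{\T{rel}}(J,A)$, the inclusion $\theta \circ \partial = 0$ is immediate: if $\gamma \in C^\infty(S^1,\GL_n(J))$ with $\gamma(1)=\one_n$, then $\sigma(t) = \gamma(e^{2\pi i t})$ ends at $\one_n$. The main step is the reverse inclusion. Assume $\theta[\sigma] = 0$, so $\sigma(1) \in [\GL_\infty(J),\GL_\infty(A)]$. Write $\sigma(1) = \prod_i [g_i,h_i]$ (after stabilizing $n$ if necessary) and apply Lemma \ref{lem:commutators} to replace each factor by $[g_i',h_i']$ with $g_i' \in \GL_\infty^0(J)$ and $h_i' \in \GL_\infty^0(A)$. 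Choose smooth paths $\alpha_i \in R_n(J)$ and $\beta_i \in R_n(A)$ with $\alpha_i(1)=g_i'$ and $\beta_i(1)=h_i'$, and set $\tilde\sigma(t) := \prod_i [\alpha_i(t),\beta_i(t)]$. Then $\tilde\sigma \in R_n(J)$, $[\tilde\sigma] \in F_n(J,A)$, and $\tilde\sigma(1) = \sigma(1)$. Since $K_1^{\T{rel}}(J,A)$ is abelian and $F_n(J,A)$ normal, we get $[\sigma] = [\sigma \tilde\sigma^{-1}]$, and $\rho := \sigma \tilde\sigma^{-1}$ is a smooth path from $\one_n$ to $\one_n$.

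It remains to exhibit $[\rho]$ as $\partial[\gamma]$ for some smooth loop $\gamma \colon S^1 \to \GL_n(J)$ with $\gamma(1) = \one_n$. The obstacle is that $\rho$ need not extend smoothly to $S^1$ at the basepoint. The standard fix is to reparametrize by a smooth $\phi \colon [0,1] \to [0,1]$ with $\phi(0)=0$, $\phi(1)=1$, and all higher derivatives vanishing at both endpoints; then $\rho \circ \phi$ extends to a smooth loop on $S^1$, and the linear interpolation $\phi_s = (1-s)\phi + s \cdot \id$ furnishes a smooth homotopy $\rho \circ \phi_s$ rel endpoints in $R_n(J)$, showing $[\rho \circ \phi] = [\rho]$ in $K_1^{\T{rel}}(J,A)$. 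The main obstacles are this reparametrization/smoothing bookkeeping and the algebraic step invoking Lemma \ref{lem:commutators}, which is precisely what allows commutators in $\GL_\infty(J,A)$ to be realized as endpoint values of commutators of smooth paths.
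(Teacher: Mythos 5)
Your proof is correct and follows essentially the same route as the paper: the only substantive point is exactness at $K_1^{\rel}(J,A)$, handled by writing the endpoint of $\sigma$ as a product of commutators, invoking Lemma \ref{lem:commutators} to connect the entries to the identity by smooth paths, and dividing off the resulting element of $[R_n(J),R_n(A)]$ (trivial in $K_1^{\rel}$) to obtain a loop in the image of $\partial$. Your reparametrization step making the closed path genuinely smooth as a map on $S^1$ is a detail the paper's proof leaves implicit, and your endpoint bookkeeping (building a correcting path ending at $\sigma(1)$ so that $\sigma\tilde\sigma^{-1}$ is a loop) is in fact cleaner than the paper's, which constructs $\tau$ ending at $\sigma(1)^{-1}$ and then writes $\gamma=\sigma\cdot\tau^{-1}$ where $\sigma\cdot\tau$ is what is meant.
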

	
	\begin{proof}
			The only non-trivial thing to check is exactness at $K_1^{\rel}(J,A)$. It is clear that $\theta \circ \partial = 0$. On the other hand, let  $\sigma \in R_n(J)$ and suppose that $[\sigma(1)^{-1}]$ is trivial in  $K_1^{\alg}(J,A)$. Then there are $g_i \in \GL_\infty(J)$ and $h_i \in \GL_\infty(A)$ such that 
			\[
				\sigma(1)^{-1} = \prod_{i=1}^n [g_i,h_i].
			\]
		By Lemma \ref{lem:commutators} we may assume that $g_i \in \GL_\infty^0(J)$ and $h_i \in \GL_\infty^0(A)$.  Hence, for some large enough $m \in \N$ there are smooth paths $\alpha_i \in R_m(J)$ and $\beta_i \in R_m(A)$ connecting $\one_m$ and $g_i$ resp. $h_i$. Then 
		\[
			\tau := \prod_{i=1}^n [\alpha_i,\beta_i] \in [R_m(J), R_m(A)]
		\]
is a path from $\one_m$ to $\sigma(1)^{-1}$. Hence $\gamma := \sigma \cdot \tau^{-1}$ is a smooth loop at $\one_m$ and $\partial([\gamma]) = [\sigma]$ since $[\tau^{-1}]$ is trivial in $K_1^{\rel}(J,A)$.
	\end{proof}
	
	\section{The relative Chern character}
	\label{section:chern-character}	
	
	\begin{note} 
		Let $(J,A)$ be a relative pair of Banach algebras. By $J \otimes_\pi A$ we denote the projective tensor product of $J$ and $A$. The compatibility of the norms on $J$ and $A$ ensures that the multiplication operator
	\[
		m \colon J \otimes_\pi A \to J; \qquad j \otimes a \mapsto   ja
	\]
is bounded.
	\end{note}

	\begin{defn}
		We define the Hochschild boundary map 
		\[
			 b \colon J \otimes_\pi A \to  J; \qquad j \otimes a \mapsto ja - aj
		\]
		and the zeroth relative continuous cyclic homology of the pair $(J,A)$ by 
		\[
			\HC_0(J,A) := J / \im (b).
		\]
		Since $\im (b) \subset J$ might not be closed we regard $\HC_0(J,A)$ simply as a vector space without further topological structure. 
	\end{defn}

\begin{defn} 
\label{def:generalized-trace}
Recall from \ref{note:relative-pairs} that $(M_n(J),M_n(A))$ is a relative pair of Banach algebras for all $n \in \N$. We thus have for each $n \in \N$ the relative continuous cyclic homology groups $\HC_0( M_n(J),M_n(A))$, and we may consider the direct limit of vector spaces
\[
\lim_{n \to \infty} \HC_0( M_n(J),M_n(A)),
\]
where the vector space homomorphisms appearing are induced by the inclusions  $M_n(J) \to M_m(J)$, $n \leq m$. This direct limit is linked to the relative continuous cyclic homology group $\HC_0(J,A)$ via the linear map
\[
\tr \colon \lim_{n \to \infty} \HC_0( M_n(J),M_n(A)) \to \HC_0(J,A),
\]
which is induced by the ``trace'' $\tr \colon M_n(J) \to J$ mapping a matrix to the sum of its diagonal entries. To verify that $\tr$ is indeed well-defined at the level of relative continuous cyclic homology one may translate the proof of \cite[Cor. 1.2.3]{LOD} to our current setting.
\end{defn}

\begin{note}
	Our next task is to construct the relative Chern character. This will be a group homomorphism
	\[
		\ch^{\rel} \colon K_1^{\rel}(J,A) \to \HC_0(J,A) 
	\]
	induced by
	\[
		R_n(J) \ni \sigma \mapsto \tr \left  (\int_0^1 \frac{d \sigma}{dt} \sigma^{-1} \ dt \right ) \in J.
	\]
We shall express $\ch^{\rel}$ as the composition of two homomorphisms: a generalized logarithm 
\[
	\log \colon K_1^{\rel}(J,A) \to \lim_{n \to \infty} \HC_0( M_n(J),M_n(A))
\] and the generalized trace  as defined in \ref{def:generalized-trace}. We now introduce the generalized logarithm:
\end{note}
	
	\begin{prop}
	There is a  well-defined homomorphism
	\[
\log \colon \left \{ \begin{split}
&  K_1^{\rel}(J,A) \to \lim_{n \to \infty} \HC_0( M_n(J),M_n(A)) \\
&  [\sigma] \mapsto \left[ \int_0^1 \frac{d\sigma}{dt} \sigma^{-1} \ dt \right]
\end{split} \right .
	\]
\end{prop}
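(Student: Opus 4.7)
The plan is to define $\widetilde{\log}_n : R_n(J) \to M_n(J)$ by the stated integral and then show that it descends through $R_n(J)/{\sim} \to (R_n(J)/{\sim})/F_n(J,A)$ and $M_n(J) \to M_n(J)/\im(b) = \HC_0(M_n(J), M_n(A))$, compatibly with the block inclusion $\sigma \mapsto \sigma \oplus \one_1$ on one side and $x \mapsto x \oplus 0$ on the other; this yields the map out of the direct limit $K_1^{\rel}(J,A)$. The integrand is $M_n(J)$-valued because $\sigma(t) - \one_n$ is $M_n(J)$-valued, so its derivative $\sigma'(t)$ is as well, and the ideal property of $M_n(J) \subset M_n(A)$ places $\sigma'(t) \sigma(t)^{-1}$ in $M_n(J)$; smoothness of $\sigma$ then makes this a routine Bochner integral. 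The three things to check are: smooth-homotopy invariance, additivity under pointwise multiplication modulo $\im(b)$, and vanishing on commutators $[\sigma, \tau]$ with $\tau \in R_n(A)$.

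For smooth-homotopy invariance, given a homotopy $H : [0,1]^2 \to \GL_n(J)$ with $H(s,0) = \one_n$ and $H(s,1)$ independent of $s$, the flatness identity
\[
\partial_s(\partial_t H \cdot H^{-1}) - \partial_t(\partial_s H \cdot H^{-1}) = [\partial_s H \cdot H^{-1}, \partial_t H \cdot H^{-1}]
\]
combined with the vanishing of $\partial_s H$ at $t = 0, 1$ yields, after double integration,
\[
\widetilde{\log}_n(H(1,\cdot)) - \widetilde{\log}_n(H(0,\cdot)) = b \left( \int_{[0,1]^2} \partial_s H \cdot H^{-1} \otimes \partial_t H \cdot H^{-1} \, ds \, dt \right) \in \im(b),
\]
since both tensor factors lie in $M_n(J)$. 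For additivity under pointwise multiplication I use the Maurer--Cartan type identity $\omega(\sigma\tau) = \omega(\sigma) + \sigma \omega(\tau) \sigma^{-1}$, where $\omega(\rho) := \rho' \rho^{-1}$, together with the following elementary sub-lemma: $\mathrm{Ad}_g(x) - x \in \im(b)$ whenever (i) $g \in \GL_n(A)$ and $x \in M_n(J)$, via $gxg^{-1} - x = b(gx \otimes g^{-1})$, or (ii) $g - \one_n \in M_n(J)$ and $x \in M_n(A)$, via $gxg^{-1} - x = b((g - \one_n) \otimes xg^{-1})$. Case (i) applied to $\sigma \omega(\tau) \sigma^{-1}$ for $\sigma, \tau \in R_n(J)$ gives $\widetilde{\log}_n(\sigma\tau) \equiv \widetilde{\log}_n(\sigma) + \widetilde{\log}_n(\tau) \pmod{\im(b)}$.

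The main technical obstacle is the vanishing on $F_n(J,A)$, since the homomorphism argument above requires both factors to lie in $R_n(J)$. For $\gamma := \sigma\tau\sigma^{-1}\tau^{-1}$ with $\sigma \in R_n(J)$ and $\tau \in R_n(A)$, iterating $\omega(\rho_1\rho_2) = \omega(\rho_1) + \mathrm{Ad}_{\rho_1}\omega(\rho_2)$ yields
\[
\omega(\gamma) = \omega(\sigma) + \mathrm{Ad}_\sigma \omega(\tau) - \mathrm{Ad}_{\sigma\tau\sigma^{-1}} \omega(\sigma) - \mathrm{Ad}_\gamma \omega(\tau).
\]
Each of the four adjoints fits one of the two cases in the sub-lemma: case (ii) handles $\mathrm{Ad}_\sigma \omega(\tau)$ and $\mathrm{Ad}_\gamma \omega(\tau)$ since $\sigma, \gamma \in \GL_n(J)$ and $\omega(\tau) \in M_n(A)$; case (i) handles $\mathrm{Ad}_{\sigma\tau\sigma^{-1}} \omega(\sigma)$ since $\sigma\tau\sigma^{-1} \in \GL_n(A)$ and $\omega(\sigma) \in M_n(J)$. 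The pointwise corrections lie in $M_n(J) \otimes_\pi M_n(A)$ and depend smoothly on $t$, so after integration in $t$ the four leading terms telescope to zero and $\widetilde{\log}_n(\gamma) \in \im(b)$. Stabilization compatibility $\widetilde{\log}_{n+1}(\sigma \oplus \one_1) = \widetilde{\log}_n(\sigma) \oplus 0$ is clear, completing the descent to a well-defined homomorphism on $K_1^{\rel}(J,A)$.
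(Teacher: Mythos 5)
Your proof is correct, and for two of the three steps it takes a genuinely different route from the paper's. The homotopy-invariance step is essentially identical: both arguments integrate the flatness identity for $\partial_sH\cdot H^{-1}$ and $\partial_tH\cdot H^{-1}$ over the square and exhibit $\widetilde{\log}_n(\sigma_1)-\widetilde{\log}_n(\sigma_0)$ as $b$ of a double integral lying in $M_n(J)\otimes_\pi M_n(A)$. Where you diverge is in additivity and in the vanishing on commutators. The paper proves additivity by reparametrizing $\sigma_0\sigma_1$ into a concatenation $(\sigma_0\circ\psi)\cdot(\sigma_1\circ\phi)$ — which requires the already-established homotopy invariance — and then splitting the integral; and it handles commutators by writing a generator of $[R_n(J),R_n(A)]$ as a product of two elements of $R_n(J)$ and deforming the conjugated path $\tau\sigma\tau^{-1}$ (conjugator in $R_n(A)$) to the constant conjugation $\tau(1)\sigma\tau(1)^{-1}$ via the homotopy $\tau(f(s,t))\sigma(t)\tau(f(s,t))^{-1}$, finishing with the observation that constant conjugation is trivial in $\HC_0(M_n(J),M_n(A))$. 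You instead work pointwise in $t$ with the Maurer--Cartan identity and the explicit $b$-preimages $gxg^{-1}-x=b(gx\otimes g^{-1})$ (for $g\in\GL_n(A)$, $x\in M_n(J)$) and $gxg^{-1}-x=b((g-\one_n)\otimes xg^{-1})$ (for $g-\one_n\in M_n(J)$, $x\in M_n(A)$); this one mechanism delivers both additivity and the four-term cancellation in $\omega(\sigma\tau\sigma^{-1}\tau^{-1})$, with each correction term visibly in $M_n(J)\otimes_\pi M_n(A)$ so that integration in $t$ keeps everything in $\im(b)$. Your route is more algebraic and self-contained (homotopy invariance is needed only to descend through $\sim$, not for multiplicativity), whereas the paper recycles the homotopy machinery it has already built; both are complete. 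One small point worth making explicit: vanishing on all of $F_n(J,A)$, not merely on single commutators, follows by combining your commutator computation with the additivity you established, since the generated subgroup consists of products of such generators and their inverses.
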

\begin{proof}
\begin{enumerate}[label=\textbf{(\Roman*)}]
\item Suppose first that $\sigma_0, \sigma_1 \in R_n(J)$ are homotopic through a smooth homotopy $H \colon [0,1] \times [0,1] \to \GL_n(J)$ with fixed endpoints. Thus, $H(t,j) = \sigma_j(t)$ for $j=0,1$.  

We will show that
	\[
		\int_0^1 \frac{d\sigma_1}{dt} \sigma_1^{-1} \ dt  - \int_0^1 \frac{d\sigma_0}{dt} \sigma_0^{-1}  \ dt  \in \im(b),
	\]
where $b \colon M_n(J) \otimes_\pi M_n(A) \to M_n(J)$ is the Hochschild boundary map associated to the relative pair $(M_n(J),M_n(A))$. 
%
%

Define 
	\[
		L(H) := - \int_0^1 \int_0^1 \frac{\partial H}{\partial t} H^{-1} \otimes \frac{\partial H}{\partial s} H^{-1} \ dt \ ds.
	\]
	We consider $L(H)$ as an element of $M_n(J) \otimes_\pi M_n(A)$ (in fact we even end up in $M_n(J) \otimes_\pi M_n(J)$ which we may then map to $M_n(J) \otimes_\pi M_n(A)$ via the inclusion $M_n(J) \to M_n(A)$). Applying the Hochschild boundary $b$ we see that 
	\[
		b(L(H)) = - \int_0^1 \int_0^1 \left [ \frac{\partial H}{\partial t} H^{-1}, \frac{\partial H }{\partial s} H^{-1} \right ] \ dt \ ds.
	\]
	An easy calculation shows that
	\begin{align*}
		\left [ \frac{\partial H}{\partial t} H^{-1}, \frac{\partial H }{\partial s} H^{-1} \right ]  & = -\frac{\partial H}{\partial t} \frac{\partial H^{-1}}{\partial s} + \frac{\partial H}{\partial s} \frac{\partial H^{-1}}{\partial t}   \\
		&  = \frac{\partial}{\partial t} \left ( \frac{\partial H}{\partial s} H^{-1} \right ) - \frac{\partial}{\partial s} \left ( \frac{\partial H}{\partial t}  H^{-1} \right )  .
	\end{align*}
	 By the fundamental theorem of calculus, we conclude 
	\begin{align*}
		b(L(H)) & = \int_0^1 \int_0^1 \frac{\partial}{\partial s} \left ( \frac{\partial H}{\partial t}  H^{-1} \right )  \ ds \ dt - \int_0^1 \int_0^1 \frac{\partial}{\partial t} \left ( \frac{\partial H}{\partial s} H^{-1} \right ) \ dt \ ds \\
		& = \int_0^1 \left ( \frac{\partial H}{\partial t}(t,1)H(t,1)^{-1} - \frac{\partial H}{\partial s}(t,0)H(t,0)^{-1} \right ) \ dt \\ 
		&   \qquad \qquad - \int_0^1 \left ( \frac{\partial H}{\partial s}(1,s) H(1,s)^{-1} - \frac{\partial H}{\partial s}(0,s)H(0,s)^{-1} \right ) \ ds \\
		 & = \int_0^1 \frac{d \sigma_1}{dt} \sigma_1^{-1} \ dt  - \int_0^1 \frac{d \sigma_0}{dt} \sigma_0^{-1} \ dt.
	\end{align*}
	The second term in the next to last line of our computation vanishes, since our homotopy has fixed endpoints.

We have thus proved that the assignment
\[
\log \colon R_n(J) \to M_n(J); \qquad \sigma \mapsto \int_0^1 \frac{d \sigma}{dt} \sigma^{-1} \ dt
\] 
descends to a well-defined map $\log \colon (R_n(J) / \sim ) \to \HC_0( M_n(J),M_n(A))$. Furthermore, since $\log$ is compatible with direct limits we obtain a well-defined map
\[
\log \colon \lim_{n \to \infty} (R_n(J) / \sim ) \to \lim_{n \to \infty}\HC_0( M_n(J),M_n(A)).
\]
We will now show that $\log ([\sigma_0 \cdot \sigma_1] )= \log ([\sigma_0]) + \log ([\sigma_1])$ for all $\sigma_0, \sigma_1 \in R_n(J)$. Choose a smooth function $\phi \colon \R \to [0,1]$ such that $\phi( (-\infty,0] ) = \{0\}$ and $\phi( [1/2,\infty)) = \{1\}$. Define the smooth function $\psi \colon \R \to [0,1]$ by $\psi(t) := \phi(t - 1/2)$. We then have that 
\[
\sigma_0 \sigma_1 \sim (\sigma_0 \circ \psi) \cdot (\sigma_1 \circ \phi)
\]
and it thus suffices to verify that $\log \big( (\sigma_0 \circ \psi) \cdot (\sigma_1 \circ \phi) \big) = \log(\sigma_0) + \log( \sigma_1)$. But this identity follows by a change of variables:
\[
\begin{split}
& \log \big( (\sigma_0 \circ \psi) \cdot (\sigma_1 \circ \phi) \big) \\
& \quad  = \int_0^{1/2} \frac{d (\sigma_1 \circ \phi)}{dt} (\sigma_1 \circ \phi)^{-1} \ dt
+ \int_{1/2}^1 \frac{d (\sigma_0 \circ \psi)}{dt} (\sigma_0 \circ \psi)^{-1} \ dt \\
& \quad = \log(\sigma_0) + \log(\sigma_1).
\end{split}
\]
\item To finish the proof of the proposition we only need to show that $\log ([ \sigma\tau \sigma^{-1}]) = \log([\tau])$ whenever $\sigma \in R_n(A)$ and $\tau \in R_n(J)$. To this end, we consider the smooth homotopy with fixed endpoints
	\[
		H(s,t):= \sigma(f(s,t))\tau(t)\sigma(f(s,t))^{-1}, \quad f(s,t) := ts + 1- s = s(t-1)  + 1
	\] 
between $\sigma \tau \sigma^{-1}$ and $\sigma(1)\tau \sigma(1)^{-1}$. This proves that
	\[
		\log([\sigma \tau \sigma^{-1}] )
= \log([\sigma(1) \tau \sigma(1)^{-1}])
= \log([\tau]),
\]
where we have used that $\sigma(1) x \sigma(1)^{-1}$ and $x$ determine the same element in $\HC_0(M_n(J),M_n(A))$ for all $x \in M_n(J)$.
\end{enumerate}
\end{proof}

\begin{defn}
\label{def:ch}
By the {\bf relative Chern character} $\ch^{\rel} \colon  K_1^{\rel}(J,A) \to \HC_0(J,A)$ we understand the homomorphism obtained as the composition
\[
\begin{CD}
\ch^{\rel} \colon K_1^{\rel}(J,A) @>{\log}>> \lim_{n \to \infty} \HC_0( M_n(J),M_n(A)) @>{\tr}>> \HC_0(J,A)
\end{CD}
\]
of the generalized logarithm and the generalized trace.
\end{defn}

\section{The relative Skandalis-de la Harpe determinant}\label{s:reldet}

\begin{note}
	Analogous to the determinant of Skandalis and de la Harpe, we are now in a position to define such a determinant purely by means of $K$-theory for relative pairs of Banach algebras. In particular we are able to deal with the presence of a not necessarily closed ideal $J$  inside a unital Banach algebra $A$.
\end{note}

\begin{defn}
	Let $(J,A)$ be a relative pair of Banach algebras. In this section we assume $\tau \colon J \to \C$ to be a continuous linear functional which additionally satisfies 
	$$
		\tau(ja) = \tau(aj) \qquad ( a \in A, j \in J)
	$$
	The latter property means that $\tau$ is a \textbf{hyper-trace}. For such a trace there is a well-defined map  (also denoted by $\tau$)
	$$
		\tau \colon \HC_0(J,A) \to \bb C; \qquad  j + \T{Im} \, (b) \mapsto \tau(j).
	$$
	Furthermore, we let
	$$
		\tilde \tau : = - \tau \circ \ch^{\rel} \colon  K_1^{\rel}(J,A) \to \C ,
	$$ with $\ch^{\rel}$ as in Definition \ref{def:ch}. Note that $\tilde \tau$ is a homomorphism into the additive group $\C$.
\end{defn}

\begin{note}
	Recall  (Lemma \ref{lem:comparison-sequence}) that there is an exact sequence in relative $K$-theory:
	\[
		\xymatrix{
		K_2^{\topp}(J) \ar[r]^\partial  & K_1^{\rel}(J,A) \ar@{->}[r]^{\theta} & K_1^{\alg}(J,A)  \ar[r]^p & K_1^{\topp}(J)  \ar[r] & 0
		}  .
	\]
	This allows us to define the \textbf{relative Skandalis-de la Harpe determinant}
	$$
		\widetilde {\rm det}_\tau \colon \T{Ker} \,(p) \to \bb C / \, \T{Im}(\tilde \tau \circ \partial)
	$$ by 
	$$
		\widetilde {\rm det}_\tau([g]) := \tilde \tau([\sigma]) +  \T{Im}(\tilde \tau \circ \partial) ,
	$$
	where $[\sigma] \in K_1^{\rel}(J,A)$ satisfies $\theta([\sigma]) = [\sigma(1)^{-1}] = [g] \ni K_1^{\alg}(J,A)$. Such a lift always exists since $\T{Ker} \, (p) = \T{Im} \,(\theta)$. Furthermore, this assignment is well-defined since if $[\sigma_0]$ and $[\sigma_1]$ are lifts of the same element $[g]$ then $[\sigma_0][\sigma_1]^{-1} \in \T{Ker} \,( \theta) = \T{Im} \, ( \partial)$. It  follows that $\tilde \tau([\sigma_0]) = \tilde \tau ([\sigma_1])$ modulo $\T{Im} \, (\tilde \tau \circ \partial)$.
	
	Compare this with the definition on page 245 of \cite{SKANDHARPE}, where absolute $K$-theory is used rather than relative $K$-theory. 
	\end{note}
	
	\begin{lemma}
		\label{lem:image-trace}
		We have the following equality of subgroups of $(\C,+)$:
		$$
	 2 \pi i 	\cdot \T{Im}  \big (	\underline \tau \colon K_0^{\topp}(J) \to  \C  \big  ) = \T{Im}(\tilde \tau \circ \partial \colon K_2^{\topp}(J) \to \C ).
		$$
		By $\underline \tau \colon K_0^{\topp}(J) \to  \C$ we understand the map induced by $\tau$.
	\end{lemma}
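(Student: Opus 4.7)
The plan is to reduce the statement to an explicit computation of $\tilde\tau \circ \partial$ on the image of the Bott periodicity isomorphism $\beta_J \colon K_0^{\topp}(J) \to K_2^{\topp}(J)$. Since $\beta_J$ is a bijection, every class in $K_2^{\topp}(J)$ is of the form $[\gamma_e \gamma_f^{-1}]$ for some idempotents $e, f \in \idem(M_n(J^+))$ with $e - f \in M_n(J)$. It therefore suffices to verify the identity $\tilde\tau \circ \partial \circ \beta_J = -2\pi i \cdot \underline\tau$ as homomorphisms $K_0^{\topp}(J) \to \C$; the equality of images then follows because additive subgroups of $\C$ are closed under negation.

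To carry out the computation I would unpack $\partial[\gamma_e \gamma_f^{-1}]$ as the smooth loop $\sigma(t) = \alpha(t)\beta(t) \in R_n(J)$, with $\alpha(t) = e^{2\pi i t} e + \one_n - e$ and $\beta(t) = e^{-2\pi i t} f + \one_n - f$. Since $\alpha(t)^{-1} = e^{-2\pi i t}e + \one_n - e$ and similarly for $\beta$, direct calculation gives $\alpha'\alpha^{-1} = 2\pi i\, e$ and $\beta'\beta^{-1} = -2\pi i\, f$, whence
\[
	\frac{d\sigma}{dt}\sigma^{-1} = \alpha' \alpha^{-1} + \alpha \beta' \beta^{-1}\alpha^{-1} = 2\pi i\bigl(e - \alpha(t)\, f\, \alpha(t)^{-1}\bigr).
\]

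The main obstacle is that the individual terms $e$ and $\alpha f\alpha^{-1}$ live in $M_n(J^+)$ rather than $M_n(J)$, whereas $\ch^{\rel}$ takes values in $\HC_0(J,A) = J/\im(b)$. I would bypass this by extending $\tau$ to a hyper-trace $\tau^+ \colon J^+ \to \C$ via $\tau^+(\one) = 0$; the hyper-trace property propagates to $\tau^+$ because $\tau([j_1, j_2]) = 0$ for all $j_1, j_2 \in J$ (take $a = j_2 \in A$ in the hyper-trace axiom). Working in $M_n(J^+)$ with $\tau^+ \circ \tr$, the standard trace identity $\tau^+(\tr(\alpha(t)\, f\, \alpha(t)^{-1})) = \tau^+(\tr(f))$ together with linearity of the integral yields
\[
	\tau^+\Bigl(\tr\Bigl(\int_0^1 \frac{d\sigma}{dt}\sigma^{-1}\,dt\Bigr)\Bigr) = 2\pi i\,\tau^+(\tr(e-f)) = 2\pi i\,\underline\tau([e]-[f]),
\]
where in the final step $\tau^+$ restricts to $\tau$ on $M_n(J)$ because $e - f \in M_n(J)$.

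Since the integrand actually lies in $M_n(J)$, the left-hand side coincides with $\tau(\ch^{\rel}(\partial[\gamma_e\gamma_f^{-1}]))$, so $\tilde\tau \circ \partial \circ \beta_J = -2\pi i\, \underline\tau$, proving the claim. The only real subtlety is the bookkeeping between $M_n(J)$ and $M_n(J^+)$, which becomes routine once the extended hyper-trace $\tau^+$ is in place.
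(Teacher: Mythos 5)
Your proposal is correct and follows essentially the same route as the paper: reduce the statement to the identity $\tilde\tau \circ \partial \circ \beta_J = -2\pi i\,\underline\tau$ and verify it by differentiating the idempotent loops. The only difference is cosmetic — the paper computes $\ch^{\rel}(\partial[\gamma_f]) = 2\pi i\,\tr(f)$ for a single idempotent and subtracts, while you treat the product loop $\gamma_e\gamma_f^{-1}$ directly and are somewhat more explicit about the $M_n(J)$ versus $M_n(J^+)$ bookkeeping via the extended trace $\tau^+$, which is a legitimate (and slightly more careful) way to organize the same computation.
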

	
	\begin{proof}
		The claim follows from commutativity of the following diagram:
		\begin{equation} \label{eq diag trace}
			\xymatrix{
				K_0^{\topp}(J) \ar[r]^{\beta_J}_\cong \ar[dr]_{- 2 \pi i \cdot \underline \tau} & K_2^{\topp}(J) \ar[d]^{\tilde \tau \circ \partial} \\
				& \C.
			}
		\end{equation}
		By $\beta_J$ we mean the Bott isomorphism map, as in Definition \ref{def:top-K}. 
		
To prove commutativity of \eqref{eq diag trace}, we note that for  $f \in \idem(M_n(J^+))$,
\[
\ch^{\rel}(\partial([\gamma_f])) = \tr \left ( 2 \pi i  \int_0^1 e^{2\pi i t}f (e^{-2 \pi it}f + \one_n - f) \ dt \right ) = 2 \pi i \tr(f).
\]
If now $e,f \in \idem(M_n(J^+))$ satisfy $e-f \in M_n(J)$, then 
\[
\tilde \tau (\partial ( \beta_J([e] - [f]))) = \tilde \tau ( \partial ([\gamma_e  \gamma_f^{-1}])) = - 2 \pi i \cdot \tau( \tr(e - f) ) .
\]
So \eqref{eq diag trace} indeed commutes.
	\end{proof}
	
	\begin{note}	
		\label{note:determinant}
		 Together with Lemma \ref{lem:image-trace} we see that the following diagram commutes:
	\[
		\xymatrix{
		K_2^{\topp}(J) \ar[r]^\partial \ar[d]^{\tilde \tau \circ \partial} & K_1^{\rel}(J,A) \ar@{->>}[r]^{\theta} \ar[d]^{\tilde \tau} & \Ker p   \ar[d]^{\widetilde{\mathrm{det}_\tau}} \ar[r]^p & 0  \\
		 2 \pi i \cdot  \T{Im} \, (\underline \tau ) \ar@{>->}[r] & \C \ar@{->>}[r] & \C /  \big ( 2 \pi i \cdot  \T{Im} \, (\underline \tau )   \big ) & 
		} 
	\]
	In the next section this will be applied to the case that the kernel of $p$ is all of $K_1^{\alg}(J,A)$. In that case we get a determinant 
	$$
		\widetilde {\rm det}_\tau : K_1^{\alg}(J,A) \to \bb C / \big ( 2 \pi i \cdot \T{Im } (\underline \tau ) \big ) .
	$$
\end{note}

\section{Topological $K$-theory of trace ideals}\label{s:toptra}

\begin{note} 
	In the following $N \subset \scr L(H)$ will always denote a semi-finite von Neumann algebra equipped with a fixed normal, faithful and semi-finite trace $\tau \colon N_+ \to [0,\infty]$. A good reference for traces on von Neumann algebras is \cite[I.6.1,\ I.6.10]{DIX}.
\end{note}

\begin{note}\label{L1tau}
We let $\| \cdot \| \colon N \to [0,\infty)$ denote the operator norm on $N$ and we let
\[
\scr L^1_\tau(N) := \{x \in N : \tau(\abs x) < \infty \} \subset N
\]
denote the trace ideal in $N$. We recall that $\scr L^1_\tau(N) \subset N$ is indeed a $*$-ideal and that $\scr L^1_\tau(N)$ becomes a Banach $*$-algebra in its own right when equipped with the norm
\[
\norm x _{1,\infty} := \norm x + \tau(\abs x)
\qquad (x \in \scr L_\tau^1(N))
\]
Moreover, it holds that $(\scr L^1_\tau(N),N)$ is a relative pair of Banach algebras in the sense of Definition \ref{def:relative-pairs}.
\end{note}

\begin{note}\label{MnL1}
For each $n \in \N$ we have that $M_n(N) \subset \scr L( H^{\oplus n})$ is a semi-finite von Neumann algebra. Indeed, we may define the normal, faithful and semi-finite trace $\tau_n \colon M_n(N)_+ \to [0,\infty]$ by $\tau_n(x) := \sum_{i = 1}^n \tau(x_{ii})$. The inclusion $M_n( \scr L^1_\tau(N)) \to M_n(N)$ then induces an isomorphism
\[
M_n(\scr L^1_\tau(N)) \cong \scr L^1_{\tau_n}( M_n(N))
\]
of Banach $*$-algebras. This isomorphism is however not an isometry since $M_n(\scr L^1_\tau(N))$ is (by convention) equipped with the norm defined as in \eqref{eq:norm-Minfty}.
\end{note}

\begin{remark}
	In order to develop the theory, we want to remark that one can drop the assumption of $\tau$ being semi-finite. This property is only (implicitly) used to prove that $K_1^{\topp}(\sL_\tau^1(N))$ is trivial. However, there is an isomorphism of Banach $*$-algebras
	$$
		\sL_\tau^1(N) \cong \sL_\tau^1(pNp),
	$$ 
	where $p = \sup \{q \in \proj(N)  : \tau(q) < \infty \}$. Of course, on the right hand side we consider the restriction of the trace $\tau : N_+ \to [0,\infty]$ to $(pNp)_+ = pN_+p$. But it is clear that $\tau : pN_+p \to  \bb [0,\infty]$ is a semi-finite trace, even if we do not assume that $\tau$ itself is semi-finite.
\end{remark}

\begin{note}
Our main task in this section is to prove that the first topological $K$-theory of the trace ideal of a semi-finite von Neumann algebra is trivial. This will allow us to define a determinant on the first algebraic $K$-theory of the relative pair $(\scr L_\tau^1(N),N)$. The following basic estimates are crucial:

\begin{enumerate}
\item There exist $c,C > 0$ such that 
\[
c |e^{it} - 1| \leq |t| \leq C |e^{it} - 1| \qquad ( t \in [-\pi,\pi])
\]
Thus, for all $x \in N$ with $x = x^*$ and $\| x \| \leq \pi$, it holds that 
\begin{equation}\label{eq:exp-trace-class}
e^{ix} - \one \in \scr L^1_\tau(N) \lrar x \in \scr L^1_\tau(N)
\end{equation}
\item Let $I \subset \R$ be a compact interval. Then there exist $d,D > 0$ such that 
\[
d |e^t - 1| \leq |t| \leq D |e^t - 1| \qquad (t \in I)
\]
Thus, for all $x \in N$ with $x = x^*$, it holds that
\begin{equation}\label{eq:log-trace-class}
e^x - \one \in \scr L^1_\tau(N) \lrar x \in \scr L^1_\tau(N)
\end{equation}
\end{enumerate}
\end{note}

\begin{defn}
For each $n \in \N$ we let $U_n( \scr L^1_\tau(N)) \subset \GL_n( \scr L^1_\tau(N))$ denote the subgroup defined by
\[
U_n( \scr L^1_\tau(N))  := \big\{ u \in \GL_n( \scr L^1_\tau(N)) : u u^* = \one_n = u^* u \big\}
\]
and we equip $U_n(\scr L^1_\tau(N))$ with the metric inherited from $\GL_n(\scr L^1_\tau(N))$.

\end{defn}
	
\begin{lemma}
\label{lem:GL_1-path-connected-semi-finite}
The group $U_n(\scr L_\tau^1(N))$ is path connected for all $n \in \N$.
\end{lemma}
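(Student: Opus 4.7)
The plan is to reduce the statement to $n=1$ and then, using the Borel functional calculus inside the von Neumann algebra $N$, exhibit an explicit continuous path from $\one$ to a given unitary.

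First I would reduce to the case $n=1$. By Note \ref{MnL1} the identification $M_n(\sL^1_\tau(N)) \cong \sL^1_{\tau_n}(M_n(N))$ is an isomorphism of Banach $*$-algebras; the two norms on the common underlying algebra are therefore equivalent and define the same topology. Under this identification $U_n(\sL^1_\tau(N))$ corresponds to $U_1(\sL^1_{\tau_n}(M_n(N)))$, and $(M_n(N),\tau_n)$ is again semi-finite. Hence path-connectedness of $U_n(\sL^1_\tau(N))$ reduces to the case $n=1$.

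Now fix $u \in U_1(\sL^1_\tau(N))$. Applying the Borel functional calculus to the normal element $u \in N$ with the bounded Borel branch $f \colon S^1 \to [-\pi,\pi]$ of $-i\log$ given by $f(e^{i\theta}) = \theta$ for $\theta \in [-\pi,\pi)$, set $x := f(u) \in N$. Then $x = x^*$, $\|x\| \leq \pi$ and $e^{ix} = u$. Since $e^{ix} - \one = u - \one \in \sL^1_\tau(N)$, the equivalence \eqref{eq:exp-trace-class} forces $x \in \sL^1_\tau(N)$. The candidate path is
\[
\sigma \colon [0,1] \to N, \qquad \sigma(t) := e^{itx}.
\]
Each $\sigma(t)$ is unitary, and because $\|tx\| \leq \pi$ with $tx \in \sL^1_\tau(N)$, a second application of \eqref{eq:exp-trace-class} to $tx$ gives $\sigma(t) - \one \in \sL^1_\tau(N)$; so $\sigma(t) \in U_1(\sL^1_\tau(N))$ for every $t$, with $\sigma(0) = \one$ and $\sigma(1) = u$.

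The only genuine step is continuity of $\sigma$ in the norm $\|\cdot\|_{1,\infty} = \|\cdot\| + \tau(|\cdot|)$. Writing
\[
\sigma(t) - \sigma(s) = e^{isx}\bigl(e^{i(t-s)x} - \one\bigr)
\]
and using that $e^{isx}$ is unitary yields $|\sigma(t) - \sigma(s)| = |e^{i(t-s)x} - \one|$. The spectrum of $(t-s)x$ lies in $[-\pi,\pi]$, so the estimate $|e^{i\theta} - 1| \leq |\theta|/c$ preceding \eqref{eq:exp-trace-class}, combined with Borel functional calculus, gives the operator inequality $|\sigma(t) - \sigma(s)| \leq \frac{|t-s|}{c}\,|x|$. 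Taking operator norm and trace then yields
\[
\|\sigma(t) - \sigma(s)\|_{1,\infty} \leq \frac{|t-s|}{c}\bigl(\pi + \tau(|x|)\bigr),
\]
so $\sigma$ is Lipschitz and hence continuous into $U_1(\sL^1_\tau(N))$. This completes the proof. The main obstacle is precisely the trace-norm continuity: without the two-sided comparison inequality packaged in \eqref{eq:exp-trace-class} one could only control operator-norm continuity, whereas passing through $|{\cdot}|$ and exploiting $\tau(|x|) < \infty$ is what makes the path actually live in $\sL^1_\tau(N)$ continuously.
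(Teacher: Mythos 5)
Your proof is correct and follows the same route as the paper: reduce to $n=1$ via the identification $M_n(\sL^1_\tau(N)) \cong \sL^1_{\tau_n}(M_n(N))$, take a self-adjoint logarithm $x$ with $\|x\|\leq\pi$, use \eqref{eq:exp-trace-class} to place $x$ in $\sL^1_\tau(N)$, and connect $\one$ to $u$ by $t\mapsto e^{itx}$. The only difference is cosmetic: where the paper cites continuity of $\exp$ on $\sL^1_\tau(N)$ via holomorphic functional calculus, you verify continuity by hand with an explicit Lipschitz estimate, which is a perfectly valid (and somewhat more self-contained) way to finish.
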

\begin{proof}
Since $M_n( \scr L_\tau^1(N)) \cong \scr L_{\tau_n}^1( M_n(N))$ by \ref{MnL1}, it suffices to verify the lemma for $n = 1$. Let thus $u \in N$ be a unitary with $u - \one \in \scr L_\tau^1(N)$. Since $u$ is unitary, we have that $\sigma(u) \subset S^1$ and we may choose a branch of the logarithm such that $x := -i\log(u)$ satisfies the conditions $\| x \| \leq \pi$ and $x = x^*$. By \eqref{eq:exp-trace-class} we  have that $x \in \scr L_\tau^1(N)$ and hence that the path  $\gamma \colon t \mapsto e^{itx}$ defines a continuous path from $\one$ to $e^{ix} = u$. Indeed, the continuity follows since $\exp \colon \scr L_\tau^1(N) \to \scr L_\tau^1(N)^+$ is continuous (by holomorphic functional calculus).
\end{proof}

\begin{lemma}\label{lem:positive-path-connected}
If $g \in \GL_n(\scr L_\tau^1(N)) \cap M_n(N)_+$, then there exists a continuous path $\gamma \colon [0,1] \to \GL_n(\scr L_\tau^1(N)) \cap M_n(N)_+$ connecting $\one_n$ and $g$.
\end{lemma}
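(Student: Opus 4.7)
The candidate path is the one-parameter group generated by $\log(g)$. Since $g \in \GL_n(M_n(N)) \cap M_n(N)_+$, there exists $\delta > 0$ with $g \geq \delta \one_n$, so the spectrum of $g$ lies in the compact interval $[\delta, \|g\|] \subset (0,\infty)$. Continuous functional calculus in $M_n(N)$ then produces a self-adjoint element $x := \log(g) \in M_n(N)$ with $e^x = g$ and with spectrum contained in $[\log \delta, \log \|g\|]$.

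The heart of the argument is to upgrade $x$ from a self-adjoint element of $M_n(N)$ to one of $M_n(\scr L^1_\tau(N))$. This is exactly what the estimate \eqref{eq:log-trace-class} is designed for: working in the semi-finite von Neumann algebra $(M_n(N),\tau_n)$ of \ref{MnL1} and using the compact interval $I$ containing $\sigma(x)$, we obtain constants $d,D>0$ with $d|e^t - 1| \leq |t| \leq D|e^t - 1|$ on $I$. Since $e^x - \one_n = g - \one_n \in M_n(\scr L^1_\tau(N))$, the estimate forces $x \in M_n(\scr L^1_\tau(N))$. Via the Banach $*$-algebra isomorphism $M_n(\scr L^1_\tau(N)) \cong \scr L^1_{\tau_n}(M_n(N))$ from \ref{MnL1}, this is indeed the content of \eqref{eq:log-trace-class} applied at level $n$.

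With $x \in M_n(\scr L^1_\tau(N))$ in hand, I define
\[
\gamma \colon [0,1] \to M_n(N), \qquad \gamma(t) := e^{tx},
\]
where the exponential is taken in the unitization $M_n(\scr L^1_\tau(N))^+$ via holomorphic functional calculus. Then $\gamma(0) = \one_n$ and $\gamma(1) = e^x = g$. Since $tx$ is self-adjoint in $M_n(N)$, each $\gamma(t)$ is self-adjoint and positive, and it is invertible in $M_n(N)$ with inverse $e^{-tx}$. Applying \eqref{eq:log-trace-class} once more, now to $tx \in M_n(\scr L^1_\tau(N))$ with $\sigma(tx)$ contained in a compact subset of $\R$, yields $\gamma(t) - \one_n = e^{tx} - \one_n \in M_n(\scr L^1_\tau(N))$, so that $\gamma(t) \in \GL_n(\scr L^1_\tau(N)) \cap M_n(N)_+$.

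Continuity of $\gamma$ with respect to the $M_n(\scr L^1_\tau(N))$-norm is immediate from continuity of the exponential map on the Banach algebra $M_n(\scr L^1_\tau(N))^+$, together with continuity of $t \mapsto tx$. The only point requiring any thought is the verification that $x = \log(g)$ actually lies in $M_n(\scr L^1_\tau(N))$, and that is precisely where the stated equivalence \eqref{eq:log-trace-class} does all the work; the rest is a routine check that the exponential path has the required properties.
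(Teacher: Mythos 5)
Your proof is correct and is essentially the paper's argument: the paper's one-line proof asserts that $\exp \colon M_n(\scr L^1_\tau(N)) \cap M_n(N)_{\T{sa}} \to \GL_n(\scr L^1_\tau(N)) \cap M_n(N)_+$ is a bijection by \eqref{eq:log-trace-class}, and you have simply made explicit the logarithm $x = \log(g)$ and the path $t \mapsto e^{tx}$ that this bijection implicitly provides. No substantive difference.
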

\begin{proof}
This follows since $\exp \colon M_n( \scr L_\tau^1(N)) \cap M_n(N)_{\T{sa}} \to \GL_n(\scr L_\tau^1(N)) \cap M_n(N)_+$ is a bijection by (\ref{eq:log-trace-class}) (where $M_n(N)_{\T{sa}}$ denotes the self-adjoint elements in $M_n(N)$).
\end{proof}

\begin{lemma}\label{lem:polar-decomposition}
For each $n \in \N$ we have that
\[
g \in \GL_n(\scr L_\tau^1(N)) \rar |g| \in \GL_n(\scr L_\tau^1(N)).
\]
\end{lemma}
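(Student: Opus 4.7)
The plan is to reduce everything to the scalar estimate \eqref{eq:log-trace-class}, applied inside the semi-finite von Neumann algebra $M_n(N)$ with trace $\tau_n$ (which is allowed by the identification $M_n(\scr L_\tau^1(N)) \cong \scr L^1_{\tau_n}(M_n(N))$ from \ref{MnL1}). Write $g \in \GL_n(\scr L_\tau^1(N))$, so that $g$, viewed in $M_n(N)$ via the unital map $(\scr L_\tau^1(N))^+ \to N$, is invertible and satisfies $g - \one_n \in M_n(\scr L_\tau^1(N))$. First I would observe that since $\scr L_\tau^1(N)$ is a two-sided ideal in $N$, the element
\[
g^*g - \one_n = g^*(g - \one_n) + (g^* - \one_n)
\]
lies in $M_n(\scr L_\tau^1(N))$.

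Next, since $g$ is invertible in $M_n(N)$, the positive element $g^*g$ has spectrum contained in a compact subinterval of $(0,\infty)$. Continuous functional calculus in $M_n(N)$ therefore yields a self-adjoint element $y := \log(g^*g) \in M_n(N)_{\T{sa}}$ with $e^y = g^*g$. Applying the equivalence \eqref{eq:log-trace-class} (in $M_n(N)$) with $x = y$, the first step gives $e^y - \one_n \in M_n(\scr L_\tau^1(N))$, hence $y \in M_n(\scr L_\tau^1(N))$. In particular $\tfrac{1}{2} y \in M_n(\scr L_\tau^1(N))$, and now applying \eqref{eq:log-trace-class} in the opposite direction with $x = \tfrac{1}{2} y$ yields
\[
|g| - \one_n = e^{y/2} - \one_n \in M_n(\scr L_\tau^1(N)).
\]

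It then remains to verify that $|g|$ is genuinely an element of $\GL_n(\scr L_\tau^1(N))$, i.e.\ that it is invertible inside $M_n\bigl((\scr L_\tau^1(N))^+\bigr)$. Since $g^*g$ is positive and invertible in $M_n(N)$, so is $|g|$, and its inverse $|g|^{-1}$ exists in $M_n(N)$. The ideal property then gives
\[
|g|^{-1} - \one_n = -|g|^{-1}\bigl(|g| - \one_n\bigr) \in M_n(\scr L_\tau^1(N)),
\]
so that $|g|$ and $|g|^{-1}$ both differ from $\one_n$ by elements of $M_n(\scr L_\tau^1(N))$, concluding the proof.

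I do not expect a real obstacle here; the only point that needs a little care is to recognise that the scalar inequalities packaged in \eqref{eq:log-trace-class} can be applied in the matrix setting, and to keep track of invertibility in the unitization $M_n((\scr L_\tau^1(N))^+)$ versus invertibility in $M_n(N)$. Both concerns are handled by \ref{MnL1} and the ideal property, respectively.
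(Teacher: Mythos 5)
Your argument is correct and follows essentially the same route as the paper: the identity $g^*g - \one_n = g^*(g-\one_n) + (g-\one_n)^*$ puts $g^*g - \one_n$ in $M_n(\scr L_\tau^1(N))$, then \eqref{eq:log-trace-class} is used in both directions to pass to $\log(g^*g)$ and back to $|g| = e^{\log(g^*g)/2}$. Your extra check that $|g|^{-1} - \one_n \in M_n(\scr L_\tau^1(N))$ is a detail the paper leaves implicit, but it is the same proof.
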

\begin{proof}
Let $g \in \GL_n(\scr L_\tau^1(N))$. Since 
\[
g^*g - \one_n = g^*(g-\one_n) + (g-\one_n)^*,
\]
we see that $g^*g - \one_n \in M_n\big( \scr L_\tau^1(N) \big)$ and hence that  (by (\ref{eq:log-trace-class})) $\log(g^* g) \in M_n(\scr L_\tau^1(N)) \cap M_n(N)_{\T{sa}}$. But then we have that $e^{ \log(g^* g)/2} = \abs g \in \GL_n(\scr L_\tau^1(N))$, proving the lemma.
\end{proof}

\begin{prop}
\label{prop:K1-trivial}
Let $N$ be a semi-finite von Neumann algebra and let $\tau \colon N_+ \to [0,\infty]$ be a fixed normal, faithful and semi-finite trace. Then, the first topological $K$-group of the Banach algebra $(\mathscr L^1_\tau(N),\norm \cdot _{1,\infty})$ is trivial, i.e.
\[
K_1^{\T{top}}(\mathscr L^1_\tau(N)) = \{0\}.
\]
\end{prop}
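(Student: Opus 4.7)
The plan is to show that every $g\in \GL_n(\sL^1_\tau(N))$ can be connected to $\one_n$ by a continuous path in $\GL_n(\sL^1_\tau(N))$; passing to the direct limit and noting that $\one_n \mapsto \one_m$ under the stabilisation maps $g \mapsto g \oplus \one_{m-n}$, this will give $\GL_\infty(\sL^1_\tau(N)) = \GL^0_\infty(\sL^1_\tau(N))$ and hence $K_1^{\T{top}}(\sL^1_\tau(N)) = \{0\}$.

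First I would use the polar decomposition in $M_n(N)$ to write $g = u \abs g$ with $u$ a partial isometry. Since $g$ is invertible in $M_n(N)$ and $\abs g \in \GL_n(\sL^1_\tau(N))$ by Lemma \ref{lem:polar-decomposition}, $u$ is in fact unitary. Moreover $u = g \cdot \abs g^{-1}$, and since $\GL_n(\sL^1_\tau(N))$ is a group under multiplication (the difference $gh - \one_n = (g - \one_n)h + (h - \one_n)$ lies in $M_n(\sL^1_\tau(N))$ whenever $g - \one_n, h - \one_n$ do, and similarly for inverses), we obtain $u - \one_n \in M_n(\sL^1_\tau(N))$. Combined with $u^*u = uu^* = \one_n$, this places $u$ in $U_n(\sL^1_\tau(N))$.

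Next I would invoke Lemma \ref{lem:GL_1-path-connected-semi-finite} to obtain a continuous path $\gamma_u \colon [0,1] \to U_n(\sL^1_\tau(N))$ with $\gamma_u(0) = \one_n$ and $\gamma_u(1) = u$, and Lemma \ref{lem:positive-path-connected} (applied to the positive invertible $\abs g$) to obtain a continuous path $\gamma_p \colon [0,1] \to \GL_n(\sL^1_\tau(N)) \cap M_n(N)_+$ with $\gamma_p(0) = \one_n$ and $\gamma_p(1) = \abs g$. Both paths are continuous with respect to $\norm{\cdot}_{1,\infty}$, so the pointwise product $t \mapsto \gamma_u(t)\gamma_p(t)$ is a continuous path in $\GL_n(\sL^1_\tau(N))$ from $\one_n$ to $g = u\abs g$. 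This shows that $\GL_n(\sL^1_\tau(N))$ is path connected, which finishes the proof.

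I do not anticipate a serious obstacle: the three preceding lemmas package exactly the analytic input that is needed (invertibility of $\abs g$ in the trace-norm sense, path-connectedness of the unitary subgroup, and path-connectedness of the positive cone). The only point requiring a moment's care is verifying that the unitary part of the polar decomposition actually lands in $U_n(\sL^1_\tau(N))$, which is why I isolate the stability of $\GL_n(\sL^1_\tau(N))$ under inversion and multiplication above.
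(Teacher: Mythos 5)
Your proposal is correct and follows essentially the same route as the paper: polar decomposition $g = u\abs{g}$, Lemma \ref{lem:polar-decomposition} to place $\abs{g}$ in $\GL_n(\sL^1_\tau(N))$, the observation that $u = g\abs{g}^{-1}$ then lies in $U_n(\sL^1_\tau(N))$, and Lemmas \ref{lem:GL_1-path-connected-semi-finite} and \ref{lem:positive-path-connected} to connect both factors to $\one_n$. The only cosmetic difference is that you justify $u - \one_n \in M_n(\sL^1_\tau(N))$ by noting $\GL_n(\sL^1_\tau(N))$ is closed under products and inverses, while the paper writes out the identity $g - u = g(\abs{g} - \one_n)\abs{g}^{-1}$ directly; both are fine.
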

\begin{proof}
Let $g \in \GL_n( \scr L^1_\tau(N))$ be given. By Lemma \ref{lem:polar-decomposition} we have that $\abs g \in \GL_n(\scr L^1_\tau(N)) \cap M_n(N)_+$ and hence that $u := g \abs g ^{-1} \in U_n( \scr L^1_\tau(N))$. To see this one can note that $M_n(\sL_\tau^1(N)) \ni g (\abs g - \one_n) \abs g^{-1} = g - u$ and $u - \one_n = -(g-u) + (g-\one_n)$. Since $g = u \abs g$, the result now follows by Lemma \ref{lem:GL_1-path-connected-semi-finite} and Lemma \ref{lem:positive-path-connected}.
\end{proof}


\section{The semi-finite Fuglede-Kadison determinant}\label{s:fugkad}

\begin{note}
	We are now going to use $K$-theory for relative pairs of Banach algebras to define our determinant. From  \ref{L1tau} we know that $(\scr L_\tau^1(N),N)$ is a relative pair of Banach algebras and that $\tau \colon \scr L_\tau^1(N) \to \bb C$ is continuous with respect to $\norm \cdot_{1,\infty}$. Since $\tau \colon \scr L_\tau^1(N) \to \C$ is moreover a hyper-trace we get (as defined in \ref{note:determinant}) a determinant
	$$
		\widetilde {\rm det}_\tau : K_1^{\alg}(\sL_\tau^1(N),N) \to \C / \big (2 \pi i \cdot \T{Im} \, (\underline \tau) \big ).
	$$
	Note that our determinant is defined on all of $K_1^{\alg}(\sL_\tau^1(N),N)$ by Proposition \ref{prop:K1-trivial}, i.e. our map $p : K_1^{\alg}(\sL_\tau^1(N),N) \to K_1^{\topp}(\sL_\tau^1(N))$ is the zero-map. 
\end{note}

\begin{lemma}\label{lem:iR}
We have
\[
\T{Im} \big (\underline \tau : K_0^{\topp}(\sL_\tau^1(N)) \to \C \big ) \subset \R.
\] 
\end{lemma}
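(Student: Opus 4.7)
The plan is to show that every class in $K_0^{\T{top}}(\sL_\tau^1(N))$ is representable as a difference $[p] - [q]$ of self-adjoint projections $p, q \in \proj_n((\sL_\tau^1(N))^+)$ with $p - q \in M_n(\sL_\tau^1(N))$. Granted such a representation, the identity $\underline{\tau}([p] - [q]) = \tau_n(p - q)$ (which follows from the definition of $\underline{\tau}$ as the map induced by $\tau$ on the Grothendieck group of idempotents, cf.\ the explicit computation in the proof of Lemma \ref{lem:image-trace}) immediately gives a real value, because $p - q$ is self-adjoint and $\tau$ sends self-adjoint elements of $\sL_\tau^1(N)$ to $\R$.

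For the reduction to projections, I would start with a representative $[e] - [f]$ where $e, f \in \idem_n((\sL_\tau^1(N))^+)$ and $e - f \in M_n(\sL_\tau^1(N))$. First, the scalar parts of $e$ and $f$ necessarily agree with some common idempotent $\Lambda \in M_n(\C)$. By simultaneously conjugating $e$ and $f$ by a scalar invertible $u \in \GL_n(\C)$---which preserves both the $K_0$-class and the condition $e - f \in M_n(\sL_\tau^1(N))$---one may assume that $\Lambda$ is a self-adjoint projection in $M_n(\C)$, so that $e - e^*$ and $f - f^*$ both lie in $M_n(\sL_\tau^1(N))$. Second, I would apply the Kaplansky-style construction to each idempotent: set $h_e := 1 + (e - e^*)(e^* - e)$. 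Because $e - e^*$ is anti-self-adjoint, $(e-e^*)(e^*-e) = -(e-e^*)^2 \geq 0$, so $h_e \geq 1$ is invertible in $M_n(N)$; and since $h_e - 1 \in M_n(\sL_\tau^1(N))$, the identity $h_e^{-1} - 1 = -(h_e - 1) h_e^{-1}$ together with the ideal property of $\sL_\tau^1(N)$ forces $h_e^{-1} \in 1 + M_n(\sL_\tau^1(N))$. A routine calculation---using that $h_e$ commutes with both $e$ and $e^*$---then shows that $p_e := e e^* h_e^{-1}$ is a self-adjoint projection in $M_n((\sL_\tau^1(N))^+)$ with scalar part still $\Lambda$. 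Moreover $p_e e = e$ and $e p_e = p_e$, which force the straight-line $t \mapsto (1 - t)e + t p_e$ to remain a continuous path of idempotents in $\idem_n((\sL_\tau^1(N))^+)$ connecting $e$ to $p_e$; hence $[e] = [p_e]$ in $K_0^{\T{top}}((\sL_\tau^1(N))^+)$. Performing the same with $f$ yields a projection $p_f$ with $[f] = [p_f]$, and since $p_e, p_f$ share the scalar part $\Lambda$, we get $p_e - p_f \in M_n(\sL_\tau^1(N))$ and $[e] - [f] = [p_e] - [p_f]$ inside $K_0^{\T{top}}(\sL_\tau^1(N))$.

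The main subtlety I anticipate is the relative-pair bookkeeping through the Kaplansky construction---specifically, verifying that $h_e^{-1}$ differs from $1$ by an element of $M_n(\sL_\tau^1(N))$, which is where both the ideal property of $\sL_\tau^1(N)$ in $N$ and the positivity $h_e \geq 1$ enter in an essential way. The remaining self-adjointness and projection identities for $p_e$ are a standard $*$-algebraic manipulation imported from the usual $C^*$-algebra proof.
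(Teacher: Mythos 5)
Your proof is correct and follows essentially the same route as the paper: reduce to a representative $[p]-[q]$ by self-adjoint projections with $p-q \in M_n(\sL_\tau^1(N))$, and then observe that $\tau$ applied to the (self-adjoint) diagonal entries of $p-q$ is real. The only difference is that the paper simply cites \cite[Prop.~4.6.2]{BA} for the passage from idempotents to projections, whereas you carry out the Kaplansky construction by hand and, commendably, also check the relative-pair bookkeeping (that $h_e^{-1} \in \one_n + M_n(\sL_\tau^1(N))$ and that $p_e - p_f$ stays in $M_n(\sL_\tau^1(N))$), a point the paper leaves implicit.
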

\begin{proof}
Every idempotent  in $\idem_\infty(\scr L_\tau^1(N)^+)$ is similar to a projection in $M_\infty(\scr L_\tau^1(N)^+)$ (see \cite[Prop. 4.6.2]{BA}).	
And for $p,q \in \proj(M_n(\sL_\tau^1(N)^+))$ with $p-q \in M_n(\sL_\tau^1(N))$ we see that 
\[
	\underline \tau([p]-[q]) =  \tau( \tr(p - q) ) \in  \bb R,
\]
where we have used that all the diagonal entries $(p - q)_{jj}$ are self-adjoint.
\end{proof}

\begin{note}
We thus have  a well-defined homomorphism
\[
	\widetilde{\mathrm{det}_\tau} \colon K_1^{\alg}(\sL_\tau^1(N),N) \to \bb C / i \bb R 
\quad \widetilde{\mathrm{det}_\tau} \colon [g] \mapsto \tilde \tau([\sigma]) + i \bb R,
\] 
	where $[\sigma] \in K_1^{\rel}(\scr L_\tau^1(N),N)$ is any lift of $[g]$, by which we mean that $\theta([\sigma]) = [\sigma(1)^{-1}] = [g]$.
	\par 
	Note that there is an isomorphism of abelian groups
	\[
		\bb C / i \bb R \to (0,\infty); \qquad z + i \bb R \mapsto e^{\Re(z)},
	\] where $\Re(z)$ denotes the real part of $z \in \C$. This gives rise to the following definition:
	\end{note}
	
	\begin{defn}
We define the {\bf semi-finite Fuglede-Kadison determinant}
\[
\mathrm{det}_\tau \colon K_1^{\alg}(\sL_\tau^1(N),N) \to (0,\infty)
\]
 by 
\[
	\mathrm{det}_\tau([g]) := e^{\Re(\widetilde{\mathrm{det}_\tau}([g]))}.
\]
	More explicitly, we have
	\[
	\mathrm{det}_\tau([g]) = \exp((\Re \circ \tilde \tau) [\sigma] ) = \exp \left  ( - (\Re \circ \tau \circ \tr) \left ( \int_0^1 \frac{d \sigma} {dt} \sigma^{-1} \ dt \right ) \right ) ,
	\]
	where $[\sigma] \in K_1^{\rel}(\sL_\tau^1(N),N)$ is any lift of $[g] \in K_1^{\alg}(\sL_\tau^1(N),N)$, meaning that $[\sigma(1)^{-1}] = [g]$.
	\end{defn}

\begin{remark}
In his description of the Fuglede-Kadison determinant for finite von Neumann algebras $N$, de la Harpe 
used the standard fact that $\GL_{\infty}(N)$ is connected, i.e.\ $K_1^{\T{top}}(N) = \{0\}$ (see Corollary 14 in \cite{HARPE}).
In the semi-finite case, we need the more subtle Proposition \ref{prop:K1-trivial}.
\end{remark}

\begin{prop}
	\label{cor:properties}
	The semi-finite Fuglede-Kadison determinant $\mathrm{det}_\tau$ has the following properties:
	\begin{enumerate}
		\item $\mathrm{det}_\tau([gh]) = \mathrm{det}_\tau([g]) \mathrm{det}_\tau ([h])$ for all $g,h \in \GL_\infty( \sL_\tau^1(N) )$.
		\item $\mathrm{det}_\tau([hgh^{-1}]) = \mathrm{det}_\tau([g])$ for all $g \in \GL_\infty(\sL_\tau^1(N))$ and $h \in \GL_\infty(N)$.
		\item $\mathrm{det}_\tau([e^x]) = (\exp \circ \Re \circ \tau \circ \tr)(x)$ for $x \in M_\infty(\sL_\tau^1(N))$.
		\item For self-adjoint $x \in \GL_1(\sL_\tau^1(N))$ we get the usual relation
	\[
		\mathrm{det}_\tau([e^x]) = e^{\tau(x)}.
	\]
	\end{enumerate}
\end{prop}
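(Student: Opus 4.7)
The plan is to derive properties (1) and (2) essentially for free from the $K$-theoretic construction of $\widetilde{\mathrm{det}_\tau}$, then verify (3) by working out the definition on a canonical path lift, and obtain (4) as a specialization of (3).

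For (1), note that $\widetilde{\mathrm{det}_\tau}$ is a group homomorphism $K_1^{\alg}(\sL_\tau^1(N),N) \to \bb C/i\bb R$ by construction. Hence $\widetilde{\mathrm{det}_\tau}([gh]) = \widetilde{\mathrm{det}_\tau}([g]) + \widetilde{\mathrm{det}_\tau}([h])$, and the isomorphism $\bb C/i\bb R \to (0,\infty)$, $z + i\bb R \mapsto e^{\Re z}$, converts this into multiplicativity of $\mathrm{det}_\tau$. For (2), given $g \in \GL_\infty(\sL_\tau^1(N))$ and $h \in \GL_\infty(N)$, the element $hgh^{-1}g^{-1}$ lies in the normal subgroup $[\GL_\infty(\sL_\tau^1(N)),\GL_\infty(N)]$ by which we quotient to form $K_1^{\alg}(\sL_\tau^1(N),N)$, so $[hgh^{-1}] = [g]$ already at the level of $K_1^{\alg}$, and applying $\mathrm{det}_\tau$ gives the claim.

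For (3), pick $n$ so that $x \in M_n(\sL_\tau^1(N))$. Then $e^{-tx} - \one_n = \sum_{k\geq 1} (-tx)^k/k!$ converges in the Banach algebra $M_n(\sL_\tau^1(N))$, so the smooth path $\sigma \colon [0,1] \to \GL_n(\sL_\tau^1(N))$ given by $\sigma(t) := e^{-tx}$ lies in $R_n(\sL_\tau^1(N))$ and satisfies $\sigma(1)^{-1} = e^x$. Thus $[\sigma] \in K_1^{\rel}(\sL_\tau^1(N),N)$ is a lift of $[e^x]$ via $\theta$. Since $x$ commutes with $e^{-tx}$, one computes
\[
\int_0^1 \frac{d\sigma}{dt}\sigma^{-1}\,dt = \int_0^1 (-x)\,dt = -x,
\]
so that $\mathrm{ch}^{\rel}([\sigma]) = \tr(-x) \in \HC_0(\sL_\tau^1(N),N)$ and $\tilde\tau([\sigma]) = -\tau(\tr(-x)) = \tau(\tr(x))$. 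Applying the definition of $\mathrm{det}_\tau$ yields $\mathrm{det}_\tau([e^x]) = e^{\Re(\tau \circ \tr)(x)}$, as required.

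Finally, (4) follows from (3): if $x \in \sL_\tau^1(N)$ is self-adjoint, then $\tau(x) \in \bb R$ because $\tau$ is hermitian on self-adjoint elements (one may verify this by splitting $x = x_+ - x_-$ into positive and negative parts, both of which have non-negative real trace). Hence $\Re(\tau(x)) = \tau(x)$ and $\mathrm{det}_\tau([e^x]) = e^{\tau(x)}$. No real obstacle arises; the only calculation requiring care is the identification $\frac{d\sigma}{dt}\sigma^{-1} = -x$, which crucially uses commutativity of $x$ with its exponential and would fail for a generic lift.
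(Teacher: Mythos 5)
Your proposal is correct and follows the same route as the paper, which simply observes that all four properties are immediate from the definition and from $\widetilde{\mathrm{det}_\tau}$ being a homomorphism; you merely make explicit the one computation worth recording, namely that $\sigma(t)=e^{-tx}$ is a legitimate lift of $[e^x]$ with $\int_0^1 \dot\sigma\sigma^{-1}\,dt=-x$. Your reading of property (4) as concerning self-adjoint $x\in\sL^1_\tau(N)$ (so that $\tau(x)\in\R$) is the intended one, matching how the paper later applies it to $x=\log\abs g$.
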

These properties follow directly from the definition of the determinant, and the fact that $\widetilde{\mathrm{det}_\tau}$ is a homomorphism. This is an advantage of the $K$-theoretic approach to constructing determinants over the functional analytic approach.  See e.g.\ Section 1 of \cite{BRO}, where the equality in the following proposition is the definition of the determinant. 
\begin{prop}
	The following explicit formula holds:
	\[
		\mathrm{det}_\tau([g]) = e^{\tau(\log \abs g)} \qquad (g \in \GL_1(\sL_\tau^1(N))).
	\]
\end{prop}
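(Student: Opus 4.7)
The plan is to reduce the claim to a polar-decomposition argument and then apply the formal properties of the determinant collected in Proposition \ref{cor:properties}.

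First, I would write $g = u |g|$ using the polar decomposition in $N$. By Lemma \ref{lem:polar-decomposition} we have $|g| \in \GL_1(\sL_\tau^1(N)) \cap N_+$, and the computation in the proof of Proposition \ref{prop:K1-trivial} shows that $u \in U_1(\sL_\tau^1(N))$, i.e.\ $u$ is a unitary in $N$ with $u - \one \in \sL_\tau^1(N)$. Multiplicativity (property (1) of Proposition \ref{cor:properties}) then gives
\[
\mathrm{det}_\tau([g]) = \mathrm{det}_\tau([u]) \cdot \mathrm{det}_\tau([|g|]),
\]
so it suffices to compute the two factors separately.

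For the unitary factor, I would proceed exactly as in the proof of Lemma \ref{lem:GL_1-path-connected-semi-finite}: choose a branch of the logarithm so that $y := -i \log(u)$ is self-adjoint with $\|y\| \leq \pi$, and then the equivalence \eqref{eq:exp-trace-class} gives $y \in \sL_\tau^1(N)$. Since $u = e^{iy}$ with $iy \in M_\infty(\sL_\tau^1(N))$, property (3) of Proposition \ref{cor:properties} yields
\[
\mathrm{det}_\tau([u]) = \exp\bigl( \Re(\tau(iy)) \bigr) = \exp\bigl( \Re(i\tau(y)) \bigr) = 1,
\]
where I have used that $\tau(y) \in \R$ because $y$ is self-adjoint and $\tau$ is a trace on a semi-finite von Neumann algebra.

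For the positive factor, set $x := \log|g|$; this is a self-adjoint element of $N$, and since $|g| - \one \in \sL_\tau^1(N)$, the equivalence \eqref{eq:log-trace-class} forces $x \in \sL_\tau^1(N)$. Then $|g| = e^x$ with $x$ self-adjoint, and property (4) of Proposition \ref{cor:properties} produces
\[
\mathrm{det}_\tau([|g|]) = \mathrm{det}_\tau([e^x]) = e^{\tau(x)} = e^{\tau(\log|g|)}.
\]
Combining the two factors gives the announced formula. I expect no real obstacle here: the only subtle point is making sure $\log|g|$ actually lives in the trace ideal, and this is precisely what the estimate \eqref{eq:log-trace-class} was set up to guarantee; everything else is a direct application of the already-established properties of $\mathrm{det}_\tau$.
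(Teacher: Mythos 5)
Your proof is correct and follows essentially the same route as the paper: polar decomposition $g = u|g|$, multiplicativity, killing the unitary factor via property (3), and evaluating the positive factor via property (4). If anything, you are slightly more careful than the paper, since you verify via \eqref{eq:exp-trace-class} that the self-adjoint logarithm of $u$ actually lies in $\sL_\tau^1(N)$ before applying property (3), a point the paper's proof glosses over.
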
	
\begin{proof}
	Let $g \in \GL_1(\scr L_\tau^1(N))$. Using the polar decomposition $g = u \abs g$ and Lemma \ref{lem:polar-decomposition} we may compute
	\[
		\mathrm{det}_\tau([g]) = \mathrm{det}_\tau([u]) \mathrm{det}_\tau([\abs g]).
	\]
	Since $u$ is a unitary in a von Neumann algebra, we may write $u = e^{ia}$ for some self-adjoint $a \in N$. By property 3. of Proposition \ref{cor:properties} we know that  
	\[
	\mathrm{det}_\tau([u]) = e^{\Re(\tau(ia)))} = 1.
\] 
	Using property 4. we see that 
	\[
		\mathrm{det}_\tau([g]) = \mathrm{det}_\tau([\abs g]) = \mathrm{det}_\tau([e^{\log \abs g}]) = e^{\tau( \log \abs g)}.
	\]
\end{proof}

\bibliography{main}

\end{document}